\newtheorem{defi}{Definition}[section]
\newtheorem{theorem}[defi]{Theorem}
\newtheorem{prop}[defi]{Proposition}
\newtheorem{cor}[defi]{Corollary}
\newtheorem{remark}[defi]{Remark}
\theoremstyle{definition}
\newcommand{\R}{\mathbb{R}}
\newcommand{\C}{\mathbb{C}}
\newcommand{\F}{\mathbb{F}}
\newcommand{\ran}{{\rm ran}\,}
\DeclareMathOperator{\re}{Re}
\title{Linear-quadratic optimal control for abstract differential-algebraic equations}
\author{H.\ Gernandt \thanks{Fraunhofer IEG, Fraunhofer Research Institution for Energy Infrastructures and Geothermal Systems IEG, Cottbus, Gulbener Straße 23, 03046 Cottbus, Germany and Universit\"{a}t Wuppertal, 
Gau\ss stra\ss e 20, 42119 Wuppertal (e-mail: gernandt@uni-wuppertal.de)} \and T.\ Reis \thanks{TU Ilmenau, Weimarer Straße 25, 98693 Ilmenau, Germany (e-mail: timo.reis@tu-ilmenau.de)}}
\begin{document}

\maketitle

\begin{abstract}
In this paper, we extend a~classical approach to linear quadratic (LQ) optimal control via Popov operators to abstract linear differential-algebraic equations (ADAEs) in Hilbert spaces. To ensure existence of solutions, we assume that the underlying differential-algebraic equation has index one in the pseudo-resolvent sense. This leads to the existence of a degenerate semigroup that can be used to define a Popov operator for our system. 
It is shown that under a suitable coercivity assumption for the Popov operator the optimal costs can be described by a bounded Riccati operator and that the optimal control input is of feedback form. Furthermore, we characterize exponential stability of ADAEs which is required to solve the infinite horizon LQ problem.
\end{abstract}
\textit{Keywords:} optimal control, differential-algebraic equations, infinite dimensional systems, linear quadratic regulator, stability \vspace{0.5cm}\\

\textit{MSC 2010:} 
	34A09, 34G10, 49N05

\section{Introduction} 
In this note, we consider abstract differential-algebraic systems in Hilbert spaces of the form
\begin{equation}
\begin{split}
\label{system}
\tfrac{{\rm d}}{{\rm d}t}Ex(t)&=Ax(t)+Bu(t),\quad Ex(0)=Ex_0,\\ y(t)&=Cx(t),\quad t\geq 0,
\end{split}
\end{equation}
where $E\in L(X,Z)$, $C\in L(X,Y)$ and $B\in L(U,Z)$ are bounded linear operators and $A: X\supseteq D(A)\rightarrow Z$ is closed and densely defined and $X$, $Z$, $U$ and $Y$ denote the Hilbert spaces of states, images of states, inputs and outputs, respectively. 

The specific challenge arises from the fact that, in numerous practically motivated instances, as observed in \cite{MehrZwar23,Reis07,Reis06,ReisTisc05}, the operator $E$ possesses non-trivial kernel and co-kernel. Examples of such scenarios include abstract Cauchy problems incorporating additional closure relations, physical problems characterized by vanishing material parameters or densities, or systems of coupled partial differential equations (PDEs) like heat-wave couplings formulated in an abstract differential-algebraic form.
Subject to the dynamics \eqref{system}, our objective is to minimize the quadratic cost on some time horizon $t_f>0$ given by 
\begin{align}
\label{quadcost}
J(u, y) = \int_0^{t_f} \left<\begin{pmatrix}y(t)\\ u(t)\end{pmatrix},\begin{bmatrix}\mathcal{Q}(t) & \mathcal{N}^*(t) \\ \mathcal{N}(t) & \mathcal{R}(t)\end{bmatrix}\begin{pmatrix}y(t)\\ u(t)\end{pmatrix} \right>_{Y\times U} {\rm d}t
\end{align}
where $\mathcal{Q}\in L^\infty([0,t_f],L(Y))$, $\mathcal{R}\in L^\infty([0,t_f],L(U))$, $\mathcal{N}\in L^\infty([0,t_f],L(Y,U))$ (that is, $\mathcal{Q}$, $\mathcal{R}$ and $\mathcal{N}$ are bounded functions with values in the space of linear operators), and additionally, $Q$ and $R$ are pointwise self-adjoint. This minimization is performed over all solutions of \eqref{system} with $u\in L^2([0,t_f],U)$ and $y \in L^2([0,t_f],Y)$.

As a main assumption, we use that the abstract differential-algebraic system has index at most one and is stable in the pseudo-resolvent sense, meaning that the following holds
\begin{equation}\label{eq:resgrowth}
\exists\, M, \omega>0:\quad\|(A-\lambda E)^{-1}E\|\leq \frac{M}{\lambda-\omega} \quad \text{ $\forall\,\lambda>\omega$.}    
\end{equation}
The incorporation of this index condition, alongside an additional stability assumption, enables us to relate our setup to the optimal control framework for standard infinite-dimensional systems presented in \cite{WeisWeis97}.

Further, we assume that the {\em transfer function} $G(s)=C(sE-A)^{-1}B$ is bounded in the complex right half-plane, that is
\begin{equation}
    C(\cdot E-A)^{-1}B\in H^{\infty}.\label{eq:GHinf}
\end{equation}
This condition guarantees that the output of the system \eqref{system} depends continuously on the input in the $L^2$-sense. We will further discuss how to eliminate these assumptions towards the conclusion of this article.

Our findings are inspired by previous results for finite-dimensional differential-algebraic equations \cite{ReisVoig19}, see also \cite{Mehr91} for an overview in the ODE case. Furthermore, we use the solution theory that is developed for inhomogeneous ADAEs of finite pseudo-resolvent index \cite{GernReis23} to define Popov operators for ADAEs. These operators can be used so solve the LQ problem on finite-time and infinite time-horizons. Recently, a different approach to LQ optimal control of ADAEs with radiality index was presented in \cite{AlaM24} using the solutions of differential Riccati equations, see also \cite{ErbJMRT24,GernReis23} for a comparison of the different index concepts.

In this note, we consider also the infinite horizon LQ problem, and here it is necessary to have exponential stability of the underlying degenerate semi-groups. Therefore, we also provide Lyapunov-like characterizations of exponential stability in terms of the pseudo-resolvents and in terms of the coefficients of the ADAE. 

The paper is organized as follows: in section~\ref{sec:prelim} we recall some preliminaries on pseudo-resolvents, mild solutions and degenerate semigroups. In section~\ref{sec:stability} we characterize the exponential stability of abstract differential-algebraic equations having pseudo-resolvent index at most one. The previous results are used to provide a solution to the linear quadratic optimal control problem in section~\ref{sec:lqr} using Popov operators.

\section{Preliminaries}
\label{sec:prelim}
In this section, we recall some results from \cite{GernReis23} on solvability of abstract differential-algebraic equations of the form 
\begin{equation}
\label{adae}
\tfrac{{\rm d}}{{\rm d}t}Ex(t)=Ax(t)+f(t),\quad (Ex)(0)=Ex_0
\end{equation}
with $f\in L^2[0,t_f]$ for some $t_f>0$.  We assume that the pair $(E,A)$ is regular, i.e.\ we have a non-empty \textit{resolvent set}
\[
\rho(E,A):=\{\lambda\in\C\, :\, \text{$\lambda E-A$ bijective}\}.
\]

\subsection{Pseudo-Resolvents}

For all $\lambda,\mu\in\rho(E,A)$, we have the resolvent identity
\begin{align}
\label{residpencil}
&\,\,\,\,\,\,\,\,(\mu-\lambda)(A-\mu E)^{-1}E(A-\lambda E)^{-1}\nonumber\\&=(A-\lambda E)^{-1}-(A-\mu E)^{-1}\\&=(\mu-\lambda)(A-\lambda E)^{-1}E(A-\mu E)^{-1}. \nonumber
\end{align}
Multiplying \eqref{residpencil} by $E$ from both the left and the right implies that 
\begin{align}
    R_l(\lambda)&:=E(A-\lambda E)^{-1},\label{eq:Rl}\\ R_r(\lambda)&:=(A-\lambda E)^{-1}E\label{eq:Rr}
\end{align} are what are known as \emph{left and right pseudo-resolvents}, see \cite{GernReis23}. The openness of $\Omega:=\rho(E,A)$, combined with the identity above, implies that both $R_l$ and $R_r$ are holomorphic operator-valued functions.

If a pseudo-resolvent $R:\Omega\rightarrow L(X)$ with $(\omega,\infty)\subseteq \Omega$ for some $\omega\in\R$ satisfies a growth condition 
\begin{equation}
\label{arendtcond}
\|R(\lambda)\|\leq \frac{M}{\lambda-\omega}, \quad \lambda\in \Omega\cap\R\text{ with }\lambda>\omega ,
\end{equation}
then by a generalization of the Hille-Yosida theorem it generates a so-called {\em degenerate semigroup}, see \cite{Aren01, GernReis23}. 
This concept will be treated in the next section.

We first show that pseudo-resolvents with property \eqref{arendtcond} also admit a~decomposition of $X$, see e.g.\ \cite{Kato}.
\begin{prop}
\label{prop:arendtyagi}
Let $X$ be a Hilbert space and $R:\Omega\rightarrow L(X)$ be a pseudo-resolvent which satisfies \eqref{arendtcond}. Then for all $\lambda\in\Omega$, $X$ decomposes into the direct sum
\begin{equation}
X=X_K\dotplus X_R= \ker R(\lambda)\dotplus \overline{\ran R(\lambda)}, \label{eq:Xdecomp}
\end{equation}
and this decomposition does not depend on the choice of the evaluation point, i.e.\ it holds that $\ran R(\lambda)=\ran R(\mu)$ and $\ker R(\lambda)=\ker R(\mu)$ for all $\mu\in\Omega$.
\end{prop}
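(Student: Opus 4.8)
The plan is to exploit the pseudo-resolvent identity together with the growth condition \eqref{arendtcond} to obtain the decomposition, essentially mimicking the classical construction of the decomposition associated with a (non-degenerate) resolvent of a semigroup generator, as in \cite{Kato}. First I would record the two consequences of the pseudo-resolvent identity $R(\lambda)-R(\mu)=(\mu-\lambda)R(\mu)R(\lambda)=(\mu-\lambda)R(\lambda)R(\mu)$: namely that the operators $R(\lambda)$ all commute with one another, and that $\ker R(\lambda)$ and $\ran R(\lambda)$ are independent of $\lambda\in\Omega$. The independence of the kernel is immediate: if $R(\lambda)v=0$ then $R(\mu)v=R(\lambda)v+(\lambda-\mu)R(\mu)R(\lambda)v=0$; the independence of the range is symmetric, using the identity in the other order. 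This gives the last sentence of the statement and lets us write $X_K:=\ker R(\lambda)$ and $X_R:=\overline{\ran R(\lambda)}$ unambiguously.

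Next I would establish $X_K\cap X_R=\{0\}$ and $X_K+X_R$ dense, and then upgrade density to an actual (topological) direct sum decomposition. The key analytic input is the behaviour of $\lambda R(\lambda)$ as $\lambda\to\infty$ along the real axis: the growth bound \eqref{arendtcond} gives $\|\lambda R(\lambda)\|\le M\lambda/(\lambda-\omega)$, so the family $\{\lambda R(\lambda)\}_{\lambda>\omega}$ is uniformly bounded. Using the resolvent identity one checks that on $\ran R(\mu)$ one has $\lambda R(\lambda)R(\mu)x = \tfrac{\lambda}{\lambda-\mu}\big(R(\mu)x - R(\lambda)x\big)\to R(\mu)x$ as $\lambda\to\infty$, since $\|R(\lambda)\|\to 0$; by uniform boundedness this convergence $\lambda R(\lambda)x\to x$ extends to all $x\in\overline{\ran R(\mu)}=X_R$. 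Meanwhile $\lambda R(\lambda)x=0$ for $x\in X_K$. Hence $P:=\mathrm{s\text{-}lim}_{\lambda\to\infty}\lambda R(\lambda)$ exists pointwise on $X_K\dotplus X_R$ at least, is bounded there, vanishes on $X_K$, and is the identity on $X_R$; this simultaneously forces $X_K\cap X_R=\{0\}$. To see $X_K + X_R = X$, write for arbitrary $x\in X$ and fixed $\mu$: $x = (x-\lambda R(\lambda)\,\cdot)$... more precisely $x = \big(x - \lambda R(\lambda)x\big) + \lambda R(\lambda)x$ where the second summand lies in $\ran R(\lambda)\subseteq X_R$; the hard point is that the first summand must land in $X_K$ in the limit, which requires showing $\lambda R(\lambda)x$ converges for \emph{every} $x\in X$, not just on the dense subspace.

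That last point is where I expect the main obstacle to be, and it is exactly where the Hilbert space hypothesis should enter. On a general Banach space a pseudo-resolvent satisfying \eqref{arendtcond} need not be "abundant" (the limit $\lambda R(\lambda)x$ may fail to exist for $x$ outside $\overline{\ran R}$), and then the sum $X_K+X_R$ is only dense. In the Hilbert space setting I would argue as follows: the adjoint family $R(\lambda)^*$ is again a pseudo-resolvent (taking adjoints in the resolvent identity) satisfying the same growth bound, so the analysis above applies to it, giving $\overline{\ran R(\lambda)^*}=(\ker R(\lambda))^{\perp}=X_K^{\perp}$ with $\lambda R(\lambda)^* \to \mathrm{id}$ strongly on $X_K^{\perp}$. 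Since $\overline{\ran R(\lambda)} = (\ker R(\lambda)^*)^{\perp}$ and, by the kernel-independence applied to $R(\cdot)^*$, $\ker R(\lambda)^* = \ker R(\mu)^*$, a short computation identifies $X_R = \overline{\ran R(\lambda)}$ as a closed subspace whose orthogonal complement is $\ker R(\lambda)^*$; then boundedness of $\lambda R(\lambda)$ uniformly in $\lambda$ together with strong convergence to the identity on the dense set $\ran R(\mu)\subseteq X_R$ and to $0$ on $X_K$, plus the orthogonal-type splitting coming from the adjoint, pins down $X=X_K\dotplus X_R$ with the associated projection being bounded (namely the strong limit of $\lambda R(\lambda)$ on all of $X$). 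Finally I would note the decomposition is the same for every $\lambda$ because both $X_K$ and $X_R$ were already shown to be $\lambda$-independent, completing the proof.
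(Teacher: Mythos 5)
The paper itself offers no proof of Proposition~\ref{prop:arendtyagi}; it is stated with a pointer to the literature (\cite{Kato}, and implicitly \cite{Aren01}), so there is nothing internal to compare against. Your strategy is the standard one for this result, and most of it is carried out correctly: the $\lambda$-independence of $\ker R(\lambda)$ and $\ran R(\lambda)$ via the pseudo-resolvent identity, the uniform bound $\|\lambda R(\lambda)\|\leq M\lambda/(\lambda-\omega)$, the strong convergence $\lambda R(\lambda)x\to x$ on $\overline{\ran R(\mu)}$ (first on $\ran R(\mu)$ by the identity, then extended by uniform boundedness), the vanishing on $\ker R(\lambda)$, and the resulting triviality of $X_K\cap X_R$ are all sound. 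You also correctly identify the one genuinely delicate point: proving $X_K+X_R=X$ rather than merely dense, which is exactly where reflexivity (here, the Hilbert space structure) must enter.

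That last step is, however, not actually closed in your write-up. The passage from ``the adjoint family is again a pseudo-resolvent with the same bound'' through ``a short computation \dots pins down $X=X_K\dotplus X_R$'' does not contain an argument: knowing $\overline{\ran R(\lambda)^*}=X_K^{\perp}$ and $\ker R(\lambda)^*=X_R^{\perp}$ gives relations among orthogonal complements but does not by itself produce, for an arbitrary $x\in X$, a decomposition $x=x_K+x_R$. Moreover, asserting that $\lambda R(\lambda)$ converges \emph{strongly} on all of $X$ presupposes the decomposition you are trying to prove. The standard way to finish (and the natural completion of your adjoint idea) is: since $X_K\cap X_R=\{0\}$, the subspace $X_K^{\perp}+X_R^{\perp}$ is dense in $X$; for $y\in X_K^{\perp}=\overline{\ran R^*}$ one has $\langle \lambda R(\lambda)x,y\rangle=\langle x,\lambda R(\lambda)^*y\rangle\to\langle x,y\rangle$, while for $y\in X_R^{\perp}=\ker R^*$ the pairing is identically zero; by uniform boundedness, $\lambda R(\lambda)x$ therefore converges \emph{weakly} to some $Px$ for every $x\in X$. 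Then $Px\in X_R$ (a norm-closed subspace is weakly closed), and applying $R(\mu)$ and using $\lambda R(\lambda)R(\mu)x\to R(\mu)x$ in norm shows $R(\mu)(x-Px)=0$, i.e.\ $x-Px\in X_K$. This yields $X=X_K\dotplus X_R$ with $\|P\|\leq M$, after which strong convergence of $\lambda R(\lambda)$ to $P$ on all of $X$ follows a posteriori. (Equivalently one can extract a weakly convergent subnet of the bounded net $\lambda R(\lambda)x$ directly; this is the reflexivity argument in \cite{Aren01}.) With this insertion your proof is complete.
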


It is shown in \cite[Theorem 3.6]{JacoMorr22}, that the growth bound \eqref{arendtcond} follows from dissipativity conditions on the coefficients $E$ and $A$, see also \cite[Section 7]{GernReis23} for a possible extension to a~Banach space setting and a~more general dissipativity concept. 
\begin{theorem}
\label{thm:jacob_morris}
Let $\lambda\in\rho(E,A)$ for some $\lambda>0$ and assume that
\begin{align*}
\re \langle Ax, Ex\rangle_Z &\leq  0,\, x \in D(A),\\
\re \langle A^*x, E^*x\rangle_X &\leq  0, x \in D(A^*).
\end{align*}
Then the following holds
\begin{itemize}
    \item[1)] $(0,\infty)\subseteq\rho(E,A)$ and $(0,\infty)\subseteq\rho(E^*,A^*)$ 
    \item[2)] $\|E(\lambda E - A)^{-1}\| \leq  \lambda^{-1}$ for $\lambda>0$;
    \item[3)] $\|(\lambda E - A)^{-1}E\| \leq  \lambda^{-1}$ for $\lambda>0$.
\end{itemize}
\end{theorem}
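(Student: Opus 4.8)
The plan is to derive everything from a~single a~priori estimate that converts the dissipativity of the pencil into a~uniform bound on the left pseudo‑resolvent, then to propagate invertibility over all of $(0,\infty)$ by a~Neumann–series bootstrap, and finally to transfer the statements involving $E^*$, $A^*$ by duality.

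First I would prove the a~priori estimate. Fix $\lambda>0$ with $\lambda\in\rho(E,A)$, let $z\in Z$, and set $x:=(\lambda E-A)^{-1}z\in D(A)$. Then
\[
\langle z,Ex\rangle_Z=\langle(\lambda E-A)x,Ex\rangle_Z=\lambda\|Ex\|_Z^2-\langle Ax,Ex\rangle_Z ,
\]
so taking real parts and using $\re\langle Ax,Ex\rangle_Z\le0$ gives $\lambda\|Ex\|_Z^2\le\re\langle z,Ex\rangle_Z\le\|z\|_Z\|Ex\|_Z$, hence $\|Ex\|_Z\le\lambda^{-1}\|z\|_Z$. This already proves 2) at every $\lambda\in\rho(E,A)\cap(0,\infty)$, and the identical computation with $A^*$, $E^*$ in place of $A$, $E$ (now using $\re\langle A^*x,E^*x\rangle_X\le0$) yields $\|E^*(\lambda E^*-A^*)^{-1}\|\le\lambda^{-1}$ at every $\lambda\in\rho(E^*,A^*)\cap(0,\infty)$.

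Next I would establish 1). Using the factorization $\lambda E-A=\bigl(I-(\mu-\lambda)E(\mu E-A)^{-1}\bigr)(\mu E-A)$ for $\mu\in\rho(E,A)$, a~Neumann series shows that $\lambda\in\rho(E,A)$ whenever $|\mu-\lambda|\,\|E(\mu E-A)^{-1}\|<1$; by the bound just proved this holds for all $\lambda\in(0,2\mu)$ once $\mu\in\rho(E,A)\cap(0,\infty)$. Starting from the hypothesised $\lambda_0\in\rho(E,A)\cap(0,\infty)$ one gets $(0,2\lambda_0)\subseteq\rho(E,A)$, and iterating over base points $\mu$ approaching the current right endpoint yields $(0,2^n\lambda_0)\subseteq\rho(E,A)$ for every $n\in\N$, whence $(0,\infty)\subseteq\rho(E,A)$. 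Since $E$ is bounded and $A$ is closed and densely defined, $(\lambda E-A)^*=\lambda E^*-A^*$, and for a~closed operator on a~Hilbert space bounded invertibility is equivalent to bounded invertibility of the adjoint; therefore $\rho(E,A)=\rho(E^*,A^*)$, so $(0,\infty)\subseteq\rho(E^*,A^*)$ as well.

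Finally, 3) follows by duality: for $\lambda>0$ we have $\bigl((\lambda E-A)^{-1}E\bigr)^*=E^*(\lambda E^*-A^*)^{-1}$, and hence $\|(\lambda E-A)^{-1}E\|=\|E^*(\lambda E^*-A^*)^{-1}\|\le\lambda^{-1}$ by the adjoint a~priori estimate together with part 1). The only genuinely delicate point is the passage between $\rho(E,A)$ and $\rho(E^*,A^*)$, i.e.\ identifying the adjoint of the pencil and invoking the closed‑range theorem; this (and keeping the bootstrap legitimate at the accumulation points $2^n\lambda_0$) uses the standing regularity hypothesis $\rho(E,A)\neq\emptyset$, which guarantees that $\lambda E-A$ is closed. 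Everything else reduces to the elementary computation above.
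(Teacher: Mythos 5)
Your proof is correct. Note that the paper itself does not prove this statement at all: it is quoted from the literature (Jacob--Morris, cited as \cite[Theorem 3.6]{JacoMorr22}), so there is no in-paper argument to compare against. Your three-step argument --- the a~priori estimate $\lambda\|Ex\|^2\le\re\langle(\lambda E-A)x,Ex\rangle\le\|(\lambda E-A)x\|\,\|Ex\|$ giving 2) pointwise on $\rho(E,A)\cap(0,\infty)$, the Neumann-series bootstrap via $\lambda E-A=\bigl(I+(\lambda-\mu)E(\mu E-A)^{-1}\bigr)(\mu E-A)$ doubling the known resolvent interval, and duality $\bigl((\lambda E-A)^{-1}E\bigr)^*=E^*(\lambda E^*-A^*)^{-1}$ combined with the adjoint dissipativity estimate to get 3) --- is the standard route for such Lumer--Phillips-type results and each step checks out; in particular the order of quantifiers is sound (the bound in 2) is available at exactly the points $\mu$ the bootstrap visits, and the second dissipativity hypothesis is used only for 3), which is consistent with the fact that bijectivity at one point is assumed rather than derived). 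One small mis-attribution: the closedness of $\lambda E-A$ (needed to pass from bijectivity to bounded invertibility and to identify $(\lambda E-A)^*=\lambda E^*-A^*$) comes from the standing assumption that $A$ is closed and $E$ is bounded, not from the regularity hypothesis $\rho(E,A)\neq\emptyset$; this does not affect the validity of the argument.
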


\subsection{Degenerate semigroups}
Here we introduce degenerate semigroups and present some facts on their relation to pseudo-resolvents. A~strongly continuous mapping $T:(0,\infty)\rightarrow L(X)$ is called \emph{degenerate semigroup} if 
\[
T(t+s)=T(t)T(s),\quad s,t>0, \quad \sup_{0<t\leq 1}\|T(t)\|<\infty.
\]
Since $X$ is a Hilbert space, it is reflexive and this implies, see \cite[Corollary 2.2]{Aren01} that the following strong limit exists $T(0):=\lim\limits_{t\downarrow 0}T(t)$. Degenerate semigroups with this property are called \emph{strongly continuous}. In this case $T(0)$ is a~bounded projection onto  $X_R:=\ran T(0)$. 
The restriction of $T$ to $X_R$, i.e., $(T(t)|_{X_R})_{t\geq 0}$, is moreover a~strongly continuous semigroup on $X_R$. Further, we have that $\ker T(t)=\ker T(0)$ for all $t\geq 0$.
In particular, 
\[\forall\,t\geq0:\quad(I-T(0))T(t)=0.\]
In other words, for $X_K:=\ker T(0)$, the operator-valued function $T(\cdot)|_{X_K}$ remains constantly zero.

For a~pseudo-resolvent $R$ that satisfies the growth condition \eqref{arendtcond} and $\{\lambda\in\C ~|~ \re\lambda> \omega\}\subseteq\Omega$, then the following subspace defines a closed densely defined linear operator $A_R$, which is uniquely defined by
\begin{multline}
\label{def:A_R}
{\rm graph}\,A_R\\=\{(R(\lambda)x,x+\lambda R(\lambda)x), x\in \overline{\ran R(\lambda)}\}\subseteq X_R\times X_R.
\end{multline}
This operator is indeed independent on the choice of $\lambda\in\Omega$, and it fulfills 
\[
\|(A_R-\lambda I)^{-1}\|\leq \frac{M}{\lambda-\omega} .
\]

Hence, according to \cite[Corollary 3.7.12]{ArenBatt11} it generates a $C_0$-semigroup $(T_R(t))_{t\geq 0}$ on $X_R$. Hence, we obtain a degenerate semigroup for $x=x_0+x_1\in X_K\dotplus X_R$
\[
T(t)x=T(t)(x_0+x_1)=T_R(t)x_1,\quad \text{for all $t\geq 0$.}
\]
Further, it holds that
\begin{align}
\label{commute}
\forall\, \lambda\in\Omega,\,t\geq 0:\quad R(\lambda)T(t)=T(t)R(\lambda).
\end{align}

\subsection{Exponential Stability of semigroups}

A strongly continuous semigroup $(T(t))_{t\geq 0}$ is called exponentially stable, if there exist $M,\omega>0$ such that 
\begin{equation}\forall\,t\geq0:\quad \|T(t)\|\leq M e^{-\omega t}.\label{eq:Texpstab}\end{equation}

We recall the following characterization of exponential stability of semigroups, see e.g.\ \cite[Theorem 8.1.3, Lemma 8.1.2, Theorem 8.1.4]{JacoZwar12}.
\begin{prop}
\label{prop:exp_stable}
Let $A$ be the infinitesimal generator of the $C_0$-semigroup $(T(t))_{t\geq0}$ on the Hilbert space $X$. Then the following is equivalent 
    \begin{itemize}
        \item[\rm (a)] $(T(t))_{t\geq0}$ is exponentially stable;
        \item[\rm (b)] There exists a positive operator $Q\in L(X)$ such that  
        \[
        \langle Ax,Qx\rangle+ \langle Qx,Ax\rangle=-\langle x,x\rangle,\quad \text{for all $x\in D(A)$.}
        \]
        \item[\rm (c)] There exists a positive operator $Q\in L(X)$ such that  
        \[
        \langle Ax,Qx\rangle+ \langle Qx,Ax\rangle\leq -\langle x,x\rangle,\quad \text{for all $x\in D(A)$.}
        \]
        \item[\rm (d)] For every $x\in X$ it holds $\int_0^\infty\|T(t)x\|^2{\rm d}t<\infty$.
        \item[\rm (e)] $(\cdot I-A)^{-1}\in H^{\infty}(L(X))$, that is, the mapping $\lambda\mapsto (\lambda I-A)^{-1}$ is bounded on the complex right half plane.
    \end{itemize}
\end{prop}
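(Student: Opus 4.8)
The plan is to close the cycle of implications (a)$\Rightarrow$(b)$\Rightarrow$(c)$\Rightarrow$(d)$\Rightarrow$(a), and then to prove (a)$\Leftrightarrow$(e) separately. For (a)$\Rightarrow$(b) I would introduce the observability Gramian $Q:=\int_0^\infty T(t)^*T(t)\,{\rm d}t$; exponential stability makes the integrand decay like $M^2e^{-2\omega t}$, so $Q\in L(X)$ is well defined, self-adjoint and positive, with $\langle Qx,x\rangle=\int_0^\infty\|T(t)x\|^2\,{\rm d}t$ (even strictly positive for $x\neq0$ by strong continuity). Using $T(s)T(t)=T(s+t)$ one obtains $\langle QT(t)x,T(t)x\rangle=\int_t^\infty\|T(s)x\|^2\,{\rm d}s$, and for $x\in D(A)$ the left-hand side is differentiable because $t\mapsto T(t)x$ is, with derivative $AT(t)x=T(t)Ax$; differentiating at $t=0$ and using $Q=Q^*$ yields $\langle Ax,Qx\rangle+\langle Qx,Ax\rangle=-\langle x,x\rangle$, i.e. (b). The step (b)$\Rightarrow$(c) is trivial.

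For (c)$\Rightarrow$(d) I would fix $x\in D(A)$ and differentiate $\varphi(t):=\langle QT(t)x,T(t)x\rangle$; applying (c) to $T(t)x\in D(A)$ gives $\varphi'(t)\le-\|T(t)x\|^2$, and integrating over $[0,\tau]$ while discarding the nonnegative term $\varphi(\tau)$ yields $\int_0^\tau\|T(t)x\|^2\,{\rm d}t\le\langle Qx,x\rangle\le\|Q\|\,\|x\|^2$; letting $\tau\to\infty$ and passing from the dense subspace $D(A)$ to all of $X$ — using boundedness of $Q$, of each $T(t)$, and $\sup_{0\le t\le1}\|T(t)\|<\infty$ — gives (d). For (d)$\Rightarrow$(a) I would run Datko's argument: by the closed graph theorem the map $x\mapsto T(\cdot)x$ is bounded from $X$ into $L^2([0,\infty);X)$, say $\int_0^\infty\|T(t)x\|^2\,{\rm d}t\le c^2\|x\|^2$; combining the elementary estimate $\|T(t)x\|\le M_1\|T(s)x\|$ for $s\in[t-1,t]$, $t\ge1$ (with $M_1:=\sup_{0\le s\le1}\|T(s)\|$), with this $L^2$-bound one first shows $\|T(t)\|\le M_0$ for all $t\ge0$, and then $t\|T(t)x\|^2=\int_0^t\|T(t-s)T(s)x\|^2\,{\rm d}s\le M_0^2c^2\|x\|^2$ forces $\|T(t_0)\|\le\tfrac12$ for $t_0$ large; the semigroup law then produces the exponential bound.

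It remains to treat (e). The implication (a)$\Rightarrow$(e) is routine: under (a) the Laplace representation $(\lambda I-A)^{-1}=\int_0^\infty e^{-\lambda t}T(t)\,{\rm d}t$ is valid and norm-bounded by $M/\omega$ on $\{\re\lambda\ge0\}$. The converse (e)$\Rightarrow$(a) is the Gearhart--Pr\"uss theorem, and this is the step where I expect essentially all the difficulty to lie and the only one that genuinely exploits the Hilbert space structure: from the uniform bound on $(\lambda I-A)^{-1}$ over the open right half-plane, a Neumann-series argument propagates the bound to a neighbourhood of the closed right half-plane, so in particular $i\R\subseteq\rho(A)$ with $\sup_{s\in\R}\|(isI-A)^{-1}\|<\infty$; one then applies Plancherel's theorem in $L^2(\R;X)$ to the Fourier transform of $t\mapsto e^{-\varepsilon t}T(t)x$, using a resolvent-identity comparison with a fixed large abscissa to bound $\int_0^\infty e^{-2\varepsilon t}\|T(t)x\|^2\,{\rm d}t$ uniformly in $\varepsilon>0$, and lets $\varepsilon\downarrow0$ to recover (d); the already-established (d)$\Rightarrow$(a) then closes the argument. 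I would either reproduce this Paley--Wiener/Plancherel computation or simply cite it from \cite{JacoZwar12}; all the remaining implications are elementary manipulations with the Gramian and the semigroup identity.
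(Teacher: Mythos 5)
Your argument is correct: the Gramian $Q=\int_0^\infty T(t)^*T(t)\,{\rm d}t$ for (a)$\Rightarrow$(b), the differentiation of $\langle QT(t)x,T(t)x\rangle$ for (c)$\Rightarrow$(d), Datko's closed-graph/$L^2$ argument for (d)$\Rightarrow$(a), and the Gearhart--Pr\"uss/Plancherel step for (e)$\Rightarrow$(a) are all sound. The paper does not prove this proposition at all --- it simply cites \cite[Theorem~8.1.3, Lemma~8.1.2, Theorem~8.1.4]{JacoZwar12} --- and your proof reconstructs exactly the standard arguments behind those cited results, so there is nothing to compare beyond noting that you supplied the details the paper delegates to the literature.
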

\begin{remark}
In a less stringent formulation, statement (b) refers to the existence of a positive operator $Q$ that satisfies the Lyapunov equation $A^*Q + AQ + I = 0$.
\end{remark}

\subsection{Mild Solutions}

Motivated by the study of linear quadratic optimal control problems, we consider here mild solutions of abstract differential-algebraic equations. Classical and weak solutions are discussed in further detail in \cite{GernReis23}. We call $x\in L^2([0,t_f],X)$ a~\textit{mild solution} of \eqref{adae} on $[0,t_f]$, $t_f> 0$, if $Ex:[0,t_f]\to Z$ is continuous, and 
\begin{align}
\forall\, t\in[0,t_f]:     \int_0^tx(\tau)d\tau&\in D(A) \label{eq:x_mild} \\
\text{ with }Ex(t)-Ex_0&=A\int_0^tx(\tau)d\tau+\int_0^tf(\tau)d\tau. \nonumber
\end{align}

To describe the mild solutions we consider the right pseudo-resolvent $R_r$ as in \eqref{eq:Rr},
and a~multiplication of \eqref{adae} from the left with $(A-\mu E)^{-1}$, $\mu\in\rho(E,A)$, and invoking
\begin{multline*}
    (A-\mu E)^{-1}A=(A-\mu E)^{-1}(A-\mu E+\mu E)\\=I-\mu(A-\mu E)^{-1}E,
\end{multline*}
we obtain, for $\hat f=(A-\lambda E)^{-1}f$
\begin{align*}    
\tfrac{{\rm d}}{{\rm d}t}R_r(\mu)x(t)&=(I-\mu R_r(\mu))x(t)+\hat f(t),\\ R_r(\mu)x(0)&=R_r(\mu)x_0.
\end{align*}
In particular, if $0\in\rho(E,A)$, we have, for $\hat f=A^{-1}f$ that
\begin{align}\label{eq:pseudoDGL}    
\frac{{\rm d}}{{\rm d}t}R_r(0)x(t)=x(t)+\hat f(t)\quad R_r(\mu)x(0)=R_r(\mu)x_0.
\end{align}

To describe the mild solutions explicitly, we decompose $\hat f=\hat f_R+\hat f_K\in L^2([0,t_f],X)$ according to the direct sum decomposition $X=X_K\dotplus X_R=\ker R(0)\dotplus\overline{\ran R(0)}$. 
Since $X_K$ and $X_R$ are both closed, there exists a~bounded projector $P\in L(X)$ that satisfies 
\begin{align}
\label{def:P}
\ran P=X_R,\quad \ker P=\ran(I-P)=X_K. 
\end{align}
In summary, we have
\begin{align}
    \label{eq:hatf_decomposition}
A^{-1}f=\hat f=\hat f_R+\hat f_K=P\hat f+(I-P)\hat f.
\end{align}
The mild solutions of the pseudo-resolvent equation \eqref{eq:pseudoDGL} have been studied in \cite[Lemma 8.1, Proposition 8.3]{GernReis23}. Below, we state the result only for the pseudo-resolvent $R_r(\lambda)=(A-\lambda E)^{-1}E$ and in the case that $\mu=0\in\rho(A)$.\\ 
We first present some facts from \cite{TucsWeis09} on unbounded control operators: For generator $D(A_R)\subset X_R\to X_R$ of a~strongly continuous semigroup $(T_R(t))_{t\geq 0}$, we can associate the space $X_{R,-1}$, which is obtained by the completion of $X$ with respect to the norm 
\[x\mapsto\|(\lambda I-A_R)^{-1}x\|_{X_R}\] for some $\lambda$ in the resolvent set of $A_R$. Indeed, by the resolvent identity, the topology induced by the above norm is independent on the choice of $\lambda$.\\
Further, $A_R$ extends to an operator $A_{R,-1}$, which generates a~semigroup $(T_{R,-1}(t))_{t\geq 0}$ on $X_{R,-1}$. The latter is also an extension of $(T_R(t))_{t\geq 0}$, and it it referred to as {\em extrapolated semigroup}. If $f_R:[0,t_f]\to X_{R,-1}$ is weakly differentiable, then the mild solution of $\tfrac{{\rm d}}{{\rm d}t}x_R(t)=A_Rx_R(t)+f_R(t)$, $x_R(0)=x_{R,0}\in X_R$ is given by $x:[0,t_f]\to X_R$ with
\[x_R(t)=T_R(t)x_0+\int_0^{t_f}T_{R,-1}(t-\tau)f_R(\tau){\rm d}\tau.\]
For a~degenerate semigroup $(T(t))_{t\geq 0}$ on $X$ with associated decomposition \eqref{eq:Xdecomp}, we define the {\em extrapolated degenerate semigroup} $(T_{-1}(t))_{t\geq 0}$ by 
\begin{align*}
    T_{-1}(t):\quad X_{R,-1}\dotplus X_K&\to X_{R,-1}\dotplus X_K,\\
    x_{0,R}+x_{0,K}&\mapsto T_{R,-1}(t)x_{0,R}.
\end{align*}

\begin{prop}
\label{prop:vdk}
Given the abstract differential-algebraic equation \eqref{adae} such that $R_r$ given by \eqref{eq:Rr} satisfies \eqref{eq:resgrowth} for some $\omega<0$ and let $T$ be the degenerated semigroup associated to $R(\lambda)=(A-\lambda E)^{-1}E$, and let $T_{-1}$ be the corresponding extrapolated semigroup. Let $A_R:D(A_R)\subset X_R\to X_R$ be the operator as in \eqref{def:A_R}. Further, we decompose $\hat f=\hat f_R+\hat f_K\in L^2([0,t_f],X)$ with $\hat f_R\in L^2([0,t_f],\overline{\ran R(0)})$ and $\hat f_K\in L^2([0,t_f],\ker R(0))$ as in \eqref{eq:hatf_decomposition}. Then for $x_0\in \overline{\ran R(0)}$ the unique mild solution of \eqref{adae} is given by 
    \begin{align}
    \label{eq:sol_x}
    x(t)=T_R(t)x_0+\int_0^tT_{R,-1}(t-\tau){A_R}\hat f_R(s)\,{\rm d}\tau-\hat f_K(t).
    \end{align}
\end{prop}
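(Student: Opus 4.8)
The plan is to reduce the ADAE \eqref{adae} to the pseudo-resolvent differential equation \eqref{eq:pseudoDGL} and then to solve the latter along the splitting $X=X_K\dotplus X_R$ furnished by Proposition~\ref{prop:arendtyagi}. Since $\omega<0$ forces $0\in\rho(E,A)$, the operator $A^{-1}\in L(Z,X)$ is bounded, and I would multiply the mild-solution identity \eqref{eq:x_mild} from the left by $A^{-1}$ (legitimate because $\int_0^t x(\tau)\,{\rm d}\tau\in D(A)$), using $A^{-1}A=I$ on $D(A)$, to obtain the equivalent integrated identity
\[
R_r(0)x(t)-R_r(0)x_0=\int_0^t x(\tau)\,{\rm d}\tau+\int_0^t \hat f(\tau)\,{\rm d}\tau,\qquad t\in[0,t_f],
\]
together with $R_r(0)x(0)=R_r(0)x_0$; conversely every $x\in L^2([0,t_f],X)$ with $Ex$ continuous solving this identity is a mild solution of \eqref{adae}. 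It therefore suffices to identify \eqref{eq:sol_x} as the unique such $x$.

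Next I would split $x=Px+(I-P)x=:x_R+x_K$ and $\hat f=\hat f_R+\hat f_K$ as in \eqref{eq:hatf_decomposition}. Since $\ran R_r(0)\subseteq X_R$ and $X_K=\ker R_r(0)$, applying $I-P$ annihilates the pseudo-resolvent terms and leaves $\int_0^t x_K=-\int_0^t\hat f_K$ for all $t$, hence $x_K=-\hat f_K$, the last summand in \eqref{eq:sol_x}. Applying $P$ instead, and using $PR_r(0)=R_r(0)$ together with the fact that by the graph description \eqref{def:A_R} at $\lambda=0$ the restriction $R_r(0)|_{X_R}$ is precisely the bounded inverse of $A_R$ (note $0\in\rho(A_R)$ because $T_R$ is exponentially stable, $\omega<0$), I would arrive at
\[
A_R^{-1}x_R(t)-A_R^{-1}x_0=\int_0^t x_R(\tau)\,{\rm d}\tau+\int_0^t \hat f_R(\tau)\,{\rm d}\tau .
\]
This is exactly the integrated form of the abstract Cauchy problem $\tfrac{{\rm d}}{{\rm d}t}y=A_Ry+\hat f_R$, $y(0)=A_R^{-1}x_0\in D(A_R)$, for $y:=A_R^{-1}x_R$, which by standard semigroup theory has the unique mild solution $y(t)=T_R(t)A_R^{-1}x_0+\int_0^t T_R(t-\tau)\hat f_R(\tau)\,{\rm d}\tau$.

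The remaining point, and the one I expect to be the main obstacle, is to pass from $y$ to $x_R=A_Ry$. Here I would exploit the extra regularity built into the inhomogeneity: since $\hat f=A^{-1}f$ with $f\in L^2([0,t_f],Z)$, the index-one (pseudo-resolvent) structure of $(E,A)$ established in \cite{GernReis23} yields $\hat f_R(\tau)\in D(A_R)$ for a.e.\ $\tau$ together with $A_R\hat f_R\in L^2([0,t_f],X_R)$ (concretely, $P$ satisfies $PA^{-1}=A^{-1}\tilde P$ for the $Z$-projector $\tilde P$ onto $EX_R$ along $AX_K$, whence $A_R\hat f_R=(E|_{X_R})^{-1}\tilde Pf$). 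Granting this, $T_R(t)A_R^{-1}x_0\in D(A_R)$, and since $T_R(t-\cdot)$ leaves $D(A_R)$ invariant, $A_R$ is closed, and $\tau\mapsto T_R(t-\tau)A_R\hat f_R(\tau)$ is Bochner integrable, the convolution term of $y(t)$ lies in $D(A_R)$ as well, and applying $A_R$ gives
\[
x_R(t)=A_Ry(t)=T_R(t)x_0+\int_0^t T_R(t-\tau)A_R\hat f_R(\tau)\,{\rm d}\tau .
\]
Since $A_R\hat f_R(\tau)\in X_R$ one may replace $T_R$ by $T_{R,-1}$, which together with $x_K=-\hat f_K$ reproduces \eqref{eq:sol_x} and shows it is a mild solution. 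Uniqueness follows by reading the splitting argument backwards: any mild solution has this $x_K$, and its $P$-component equals $A_R$ applied to the unique mild solution of the above Cauchy problem. Finally, $x_K(t)\in X_K=\ker E$ gives $Ex(t)=Ex_R(t)$, and $x_R(0)=x_0$ yields $Ex(0)=Ex_0$, so the initial condition is met. If one instead cites \cite[Lemma~8.1, Proposition~8.3]{GernReis23} for the mild solution of \eqref{eq:pseudoDGL} directly, this last step is subsumed and the proof reduces to translating that formula through the identity $R_r(0)|_{X_R}=A_R^{-1}$.
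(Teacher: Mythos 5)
Your overall strategy coincides with the paper's: multiply by $A^{-1}$ to pass to the pseudo-resolvent equation \eqref{eq:pseudoDGL} and back (the paper does this equivalence in the second half of its proof), split along $X=X_K\dotplus X_R$, read off $x_K=-\hat f_K$ from the $(I-P)$-component, and solve the $X_R$-component using $A_R^{-1}=(A^{-1}E)|_{X_R}$ and the semigroup $T_R$. Up to the order of the steps, this is the paper's argument.

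The one genuine gap is exactly the step you flag as the main obstacle: passing from $y=A_R^{-1}x_R$ back to $x_R=A_Ry$ by applying $A_R$ under the convolution. This requires $\hat f_R(\tau)=PA^{-1}f(\tau)\in D(A_R)$ for a.e.\ $\tau$ with $A_R\hat f_R\in L^2([0,t_f],X_R)$, and that claim is not established and is false in general. Here $D(A_R)=\ran R(0)=\ran(A^{-1}E)$, whereas $\ran P=X_R=\overline{\ran(A^{-1}E)}$; in infinite dimensions the range need not be closed, so $PA^{-1}f(\tau)$ lands in $X_R$ but not necessarily in $D(A_R)$. The intertwining $PA^{-1}=A^{-1}\tilde P$ you invoke is itself unproven and, even granted, would only place $\hat f_R(\tau)$ in $A^{-1}\overline{EX_R}$, not in the (non-closed) set $\ran(A^{-1}E)$. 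The paper sidesteps this entirely: it rewrites the $X_R$-equation as \eqref{eq:mild_R_2} with inhomogeneity $A_RPA^{-1}f$ interpreted as an element of the extrapolation space $X_{R,-1}$ (i.e.\ $A_R$ is really $A_{R,-1}\colon X_R\to X_{R,-1}$) and then quotes the mild-solution formula with the extrapolated semigroup $T_{R,-1}$ — which is precisely why $T_{R,-1}$, rather than $T_R$, appears in \eqref{eq:sol_x}. If your regularity claim were true, the extrapolation machinery would be superfluous. To repair your argument, drop the claim that $\hat f_R$ takes values in $D(A_R)$, work with $A_{R,-1}\hat f_R\in L^2([0,t_f],X_{R,-1})$ from the outset, and apply $A_{R,-1}$ (which commutes with $T_{R,-1}$ and is an isomorphism $X_R\to X_{R,-1}$ since $0\in\rho(A_R)$) to the mild-solution formula for $y$ in $X_{R,-1}$; the resulting convolution then has to be shown to take values in $X_R$, which is where the paper's standing hypotheses (and, for the output, assumption \eqref{eq:GHinf}) enter.
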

\begin{proof}
First, we show that $x$ given by \eqref{eq:sol_x} is the unique mild solution of the pseudo-resolvent equation~\eqref{eq:pseudoDGL}. To this end, we construct a mild solution $x_R$ on $X_R$ and $x_K$ on $X_K$.  
First, observe that $x_K(t):=-\hat f_K\in L^2([0,t_f],X_K)$ satisfies~\eqref{eq:psres_proof} pointwise for all $t\in[0,t_f]$. The mild solutions $x_R$ of \eqref{eq:psres_proof} for $x_R(0)=x_R^0\in X_R$  
\begin{align}
\label{eq:mild_R}
A^{-1}Ex_R(t)-A^{-1}Ex_R(0)=\int_0^t (x_R(\tau)+PA^{-1}f(\tau))\,{\rm d}\tau,
\end{align}
where $P$ is the projection onto $X_R$ that is defined in \eqref{def:P}. This can be rewritten using the operator $A_R$ from \eqref{def:A_R}, which is invertible and fulfills $A_R^{-1}=(A^{-1}E)|_{X_R}$. Therefore, \eqref{eq:mild_R} is equivalent to 
\begin{align}
\label{eq:mild_R_2}
x_R(t)-x_R(0)=A_R\int_0^t x_R(\tau)\,{\rm d}\tau +\int_0^t A_RPA^{-1}f(\tau)\,{\rm d}\tau.
\end{align}
By assumption $A_R$ generates the $C_0$-semigroup $\{T_R(t)\}_{t\geq 0}$, and hence, the unique mild solution $x_R$ of~\eqref{eq:mild_R_2} is given by
\[
x_R(t)=T_R(t)x_0+\int_0^tT_{R,-1}(t-s)A_RPA^{-1}f(\tau)\,{\rm d}s. 
\]
Next, we show that the mild solution of the pseudo-resolvent equation \eqref{eq:pseudoDGL} coincide with the mild solutions of \eqref{adae}. Let $A^{-1}Ex:[0,t_f)\rightarrow X$ be continuous and satisfying for all $t\geq 0$
\begin{align}
    \label{eq:psres_proof}
A^{-1}Ex(t)-A^{-1}Ex(0)&=\int_0^t x(\tau)\,{\rm d}\tau +\int_0^t\hat f(\tau)\,{\rm d}\tau\\
&=\int_0^t x(\tau)\,{\rm d}\tau +A^{-1}\int_0^tf(\tau)\,{\rm d}\tau.\nonumber 
\end{align}
Since $\ran A^{-1}=D(A)$, we conclude $\int_0^t x(\tau)\,{\rm d}\tau\in D(A)$. Therefore, applying $A$ to \eqref{eq:psres_proof} leads to \eqref{eq:x_mild} and therefore $x$ is a mild solution of \eqref{adae}. Conversely, if $x$ is a mild solution of \eqref{adae}, then $Ex$ is continuous and therefore $A^{-1}Ex$ is continuous and multiplying with $A^{-1}$ from the left in \eqref{eq:x_mild} implies \eqref{eq:psres_proof}. \hfill \hfill \qed
\end{proof}

\section{Exponential stability of abstract differential-algebraic equations}
\label{sec:stability}
Utilizing the generator $A_R$ as described in \eqref{def:A_R}, we can extend the characterization of exponential stability from Proposition~\ref{prop:exp_stable} to pseudo-resolvents and degenerate semigroups. In this context, for a~given degenerate semigroup $(T(t))_{t\geq 0}$ that is exponentially stable in the sense that it fulfills \eqref{eq:Texpstab}, we  apply Laplace transform yielding a~pseudo resolvent that is defined for all $\lambda\in\C$ with $\re(\lambda)>\omega$, as discussed in \cite{Aren01}, namely
\begin{align}
\label{def:R_from_T}
R(\lambda)x := \lim_{\tau \to \infty}\int_0^\tau e^{-\lambda t}T(t)x  {\rm d}t, \quad \re(\lambda) > \omega.
\end{align}
The following characterization of exponential stability can be directly proven by combining Proposition~\ref{prop:exp_stable} together with the decomposition \eqref{eq:Xdecomp}.
\begin{prop}
\label{prop:exp_stable_deg}
Let $(T(t))_{t\geq0}$ be a (continuous) degenerated semigroup on the Hilbert space $X$ with associated pseudo-resolvent $R:\Omega\rightarrow L(X)$ given by \eqref{def:R_from_T} with $0\in\Omega$. Then the following is equivalent.
    \begin{itemize}
        \item[\rm (a)] $(T(t))_{t\geq0}$ is exponentially stable.
        \item[\rm (b)] There exists a positive operator $Q\in L(X)$ such that  
        \[
        \langle R(0)x,Qx\rangle+ \langle Qx,R(0)x\rangle=-\langle R(0)x,R(0)x\rangle,\quad 
        \]
        for all $x\in \overline{\ran R(0)}$.
        \item[\rm (c)] There exists a positive operator $Q\in L(X)$ such that  
        \[
        \langle R(0)x,Qx\rangle+ \langle Qx,R(0)x\rangle\leq -\langle R(0)x,R(0)x\rangle,\quad 
        \]
        for all $x\in \overline{\ran R(0)}$.
        \item[\rm (d)] For every $x\in X$ it holds $\int_0^\infty\|T(t)x\|^2{\rm d}t<\infty$.
        \item[\rm (e)] $R\in H^{\infty}(L(X))$. 
    \end{itemize}
\end{prop}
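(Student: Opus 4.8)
The approach is to transport every assertion to the genuine $C_0$-semigroup $(T_R(t))_{t\ge 0}=(T(t)|_{X_R})_{t\ge 0}$ on the Hilbert space $X_R=\overline{\ran R(0)}$, which is generated by the operator $A_R$ from \eqref{def:A_R}, and then to invoke Proposition~\ref{prop:exp_stable}. Two observations make the reduction work. First, with $P\in L(X)$ the bounded projection onto $X_R$ along $X_K$ as in \eqref{def:P}, one has $T(t)=T_R(t)P$ for all $t\ge0$, and evaluating \eqref{def:A_R} at $\lambda=0$ shows that $R(0)|_{X_R}$ is a bounded bijection of $X_R$ onto $D(A_R)$ with $A_R\,R(0)x=x$ for $x\in X_R$; since $R(0)$ vanishes on $X_K=\ker R(0)$ by Proposition~\ref{prop:arendtyagi}, this yields $R(0)=A_R^{-1}P$, and more generally $R(\lambda)=(A_R-\lambda I)^{-1}P$ for $\lambda$ in the right half-plane. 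Second, $(T(t))_{t\ge0}$ is exponentially stable if and only if $(T_R(t))_{t\ge0}$ is, since $\|T(t)\|\le\|P\|\,\|T_R(t)\|$ while conversely $(T_R(t))_{t\ge0}$ is a restriction of $(T(t))_{t\ge0}$.

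With this in hand, (a)$\Leftrightarrow$(d) and (a)$\Leftrightarrow$(e) are immediate. For (d), since $T(t)x=T_R(t)Px$ and $P$ maps onto $X_R$, finiteness of $\int_0^\infty\|T(t)x\|^2\,{\rm d}t$ for all $x\in X$ is equivalent to finiteness of $\int_0^\infty\|T_R(t)z\|^2\,{\rm d}t$ for all $z\in X_R$, which by Proposition~\ref{prop:exp_stable}(d) applied to $(T_R(t))_{t\ge0}$ is exponential stability of $(T_R(t))_{t\ge0}$. For (e), from $R(\lambda)=(A_R-\lambda I)^{-1}P$ one gets $\|R(\lambda)\|_{L(X)}\le\|P\|\,\|(\lambda I-A_R)^{-1}\|_{L(X_R)}$ and, restricting to $X_R$, $\|(\lambda I-A_R)^{-1}\|_{L(X_R)}\le\|R(\lambda)\|_{L(X)}$; hence $R\in H^\infty(L(X))$ if and only if $(\cdot I-A_R)^{-1}\in H^\infty(L(X_R))$, and Proposition~\ref{prop:exp_stable}(e) finishes the equivalence.

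It remains to treat (a)$\Leftrightarrow$(b)$\Leftrightarrow$(c), where (b)$\Rightarrow$(c) is trivial. Putting $w=R(0)x$ and $x=A_R w$, and noting that $w$ runs through all of $D(A_R)$ as $x$ runs through $\overline{\ran R(0)}$, the term $\langle R(0)x,Qx\rangle+\langle Qx,R(0)x\rangle$ becomes $\langle A_R w,Qw\rangle+\langle Qw,A_R w\rangle$ (using $Q=Q^*$), while $-\langle R(0)x,R(0)x\rangle=-\langle w,w\rangle$; thus (b), respectively (c), is exactly the Lyapunov equation, respectively inequality, of Proposition~\ref{prop:exp_stable} for $A_R$, except that $Q$ still acts on all of $X$. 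To move between $L(X)$ and $L(X_R)$, let $\iota\colon X_R\hookrightarrow X$ be the inclusion, so that $\iota^*\colon X\to X_R$ is the orthogonal projection onto $X_R$. If a positive $Q\in L(X)$ satisfies (c), then $Q_R:=\iota^*Q\iota\in L(X_R)$ is positive and, as $A_R w\in X_R$, fulfills $\langle A_R w,Q_R w\rangle+\langle Q_R w,A_R w\rangle\le-\langle w,w\rangle$ for $w\in D(A_R)$; Proposition~\ref{prop:exp_stable}(c) then gives exponential stability of $(T_R(t))_{t\ge0}$, hence (a). Conversely, if (a) holds, Proposition~\ref{prop:exp_stable}(b) for $A_R$ supplies a positive $Q_R\in L(X_R)$ solving the Lyapunov equation, and $Q:=\iota Q_R\iota^*\in L(X)$ is then positive and self-adjoint and, because $R(0)x\in X_R$ for $x\in X_R$, recovers the identity in (b). The step I expect to require the most care is precisely this correspondence between operators on $X_R$ and on $X$: since $X=X_K\dotplus X_R$ is a direct but not an orthogonal sum, the spectral projection $P$ (with $\ker P=X_K$) and the orthogonal projection $\iota\iota^*$ onto $X_R$ are genuinely different operators, and one must verify that compressing and inflating $Q$ along $\iota,\iota^*$ preserves positivity and self-adjointness and leaves the inner products in (b) and (c) unchanged, which works because $R(0)x$ and $A_R w$ already lie in $X_R$. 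Everything else is a direct transcription of Proposition~\ref{prop:exp_stable}.
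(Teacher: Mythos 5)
Your argument is correct and is precisely the route the paper intends: the paper gives no written proof, remarking only that the proposition ``can be directly proven by combining Proposition~\ref{prop:exp_stable} together with the decomposition \eqref{eq:Xdecomp}'', and your reduction to the $C_0$-semigroup $(T_R(t))_{t\geq 0}$ on $X_R$ via $T(t)=T_R(t)P$, $R(\lambda)=(A_R-\lambda I)^{-1}P$ and the substitution $w=R(0)x$ is exactly that reduction, carried out in detail. The only point worth a footnote is that if ``positive'' is read strictly, the inflated operator $\iota Q_R\iota^*$ degenerates on $X_R^{\perp}$; adding $I-\iota\iota^*$ repairs this without affecting the identity in (b), since the test vectors $R(0)x$ and $x$ there lie in $X_R$.
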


Combining the latter results with Theorem~\ref{thm:jacob_morris}, we obtain the following result on exponential stability in the dissipative case. 
\begin{cor}
If $0\in\rho(A)$, then $0\in\rho(E,A)$ and assume that there exists $\omega>0$ such that
the following estimates hold
\begin{align}
\re \langle Ax, Ex\rangle_Z &\leq  -\omega\|Ex\|^2, x \in D(A),\\
\re \langle A^*x, E^*x\rangle_X &\leq  -\omega \|E^*x\|^2, x \in D(A^*),
\end{align}
Then we have 
\begin{itemize}
\item[1)]
$(-\omega,\infty)\subseteq\rho(E,A)$ and $(-\omega,\infty)\subseteq\rho(E^*,A^*)$;
    \item[2)] $\|E(\lambda E - A)^{-1}\| \leq  (\lambda+\omega)^{-1}$ for $\lambda>-\omega$;
    \item[3)] $\|(\lambda E - A)^{-1}E\| \leq  (\lambda+\omega)^{-1}$ for $\lambda>-\omega$.
\end{itemize}
In particular, the degenerated semigroup associated with the pseudo-resolvent $\lambda\mapsto (A-\lambda E)^{-1}E$ is exponentially stable. 
\end{cor}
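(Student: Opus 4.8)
The plan is to reduce the statement to Theorem~\ref{thm:jacob_morris} by shifting the ``$A$-part'' of the pencil. Concretely, I would set $\tilde A:=A+\omega E$ on $D(\tilde A):=D(A)$. Since $\omega E$ is bounded, $\tilde A$ is again closed and densely defined, with adjoint $\tilde A^*=A^*+\omega E^*$ on $D(A^*)$. For $x\in D(A)$ one has
\[
\re\langle \tilde A x,Ex\rangle_Z=\re\langle Ax,Ex\rangle_Z+\omega\|Ex\|^2\le 0 ,
\]
and in the same way $\re\langle \tilde A^*x,E^*x\rangle_X\le 0$ for $x\in D(A^*)$, so the strict dissipativity of $(E,A)$ becomes the (non-strict) dissipativity required by Theorem~\ref{thm:jacob_morris} for the pair $(E,\tilde A)$. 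Moreover $0\in\rho(A)$ gives $0\in\rho(E,A)$ (as $0\cdot E-A=-A$ is bijective), and since $\omega E-\tilde A=-A$ this shows $\omega\in\rho(E,\tilde A)$ with $\omega>0$; hence $(E,\tilde A)$ meets all hypotheses of Theorem~\ref{thm:jacob_morris}.

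Next I would simply translate the conclusions of Theorem~\ref{thm:jacob_morris}, applied to $(E,\tilde A)$, back to $(E,A)$. That theorem yields $(0,\infty)\subseteq\rho(E,\tilde A)$, $(0,\infty)\subseteq\rho(E^*,\tilde A^*)$, and the estimates $\|E(\lambda E-\tilde A)^{-1}\|\le\lambda^{-1}$ and $\|(\lambda E-\tilde A)^{-1}E\|\le\lambda^{-1}$ for $\lambda>0$. Using $\lambda E-\tilde A=(\lambda-\omega)E-A$ and $\lambda E^*-\tilde A^*=(\lambda-\omega)E^*-A^*$ together with the substitution $\mu:=\lambda-\omega$ (so that $\lambda>0$ corresponds to $\mu>-\omega$), these are exactly items 1), 2) and 3).

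For the final claim I would argue as follows. By item 3), the right pseudo-resolvent $R_r$ from \eqref{eq:Rr} satisfies $\|R_r(\mu)\|\le(\mu+\omega)^{-1}$ for all real $\mu>-\omega$, i.e.\ the growth bound \eqref{arendtcond} holds with $M=1$ and growth parameter $-\omega<0$. Then the generator $A_R$ from \eqref{def:A_R} obeys $\|(\mu I-A_R)^{-1}\|\le(\mu+\omega)^{-1}$ for $\mu>-\omega$, so $A_R+\omega I$ satisfies the Hille--Yosida estimate $\|(\mu I-(A_R+\omega I))^{-1}\|\le\mu^{-1}$ for $\mu>0$ and hence generates a contraction semigroup; equivalently, the $C_0$-semigroup $T_R$ generated by $A_R$ fulfils $\|T_R(t)\|\le e^{-\omega t}$. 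Since the associated degenerate semigroup is $T(t)=T_R(t)T(0)$, one gets $\|T(t)\|\le\|T(0)\|\,e^{-\omega t}$, i.e.\ exponential stability; alternatively this can be read off from Proposition~\ref{prop:exp_stable_deg}. I do not expect any serious obstacle here: the substance is the bookkeeping of the shift (verifying that $(E,\tilde A)$ is still regular and dissipative) and the affine change of spectral parameter inside the norm bounds; the only point deserving a word of care is that the first-power resolvent estimate already suffices for exponential decay, which is valid here precisely because the constant in \eqref{arendtcond} equals $1$.
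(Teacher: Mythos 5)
Your proof is correct and follows exactly the route the paper intends (the paper states the corollary as obtained by ``combining the latter results with Theorem~\ref{thm:jacob_morris}'' and gives no further details): shift to $\tilde A=A+\omega E$, verify the non-strict dissipativity and the regularity point $\omega\in\rho(E,\tilde A)$, apply Theorem~\ref{thm:jacob_morris}, and substitute $\mu=\lambda-\omega$. Your closing observation that the constant $M=1$ in the resulting bound is what makes the first-power resolvent estimate yield a (shifted) contraction semigroup, and hence exponential stability of the degenerate semigroup, is exactly the right point of care.
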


\section{Linear quadratic optimal control of abstract differential-algebraic equations}
\label{sec:lqr}
In this section, we study LQ optimal control for ADAEs on a finite or infinite time horizons and we consider the pseudo-resolvent $R_r$ as in \eqref{eq:Rr}, 
and we assume that it fulfills the growth condition \eqref{arendtcond} for $\Omega=\{\lambda\in\C ~|~ \re\lambda> \omega\}$ for some $\omega<0$. Further, let $T$ be the associated exponentially stable degenerated semi-group. With the notation as in Proposition~\ref{prop:vdk}, we use the bounded projection $P$ which fulfills $\ran P=X_R$ and $\ker P=X_K$ and decompose $A^{-1}Bu=(I-P)A^{-1}Bu+PA^{-1}Bu=x_0+ x_1\in X$ and set $\hat B_0u:=(I-P)A^{-1}Bu$ and $\hat B_1u:=PA^{-1}Bu$ for all $u \in U$. Then $\hat B_1\in L(U,X_R)$, $\hat B_0\in L(U,X_K)$ and the mild solution of \eqref{adae} is given by 
\[
x(t)=T(t)x_0+\int_0^tT_{-1}(t-s){A_R}\hat B_1u(s)ds-\hat B_0u(t).
\]
Accordingly, the output is given by 
\begin{align}
\label{eq:output}
y(t)&=Cx(t)\\&=CT(t)x_0+C\int_0^tT_{-1}(t-s){A_R}\hat B_1u(s){\rm d}s-C\hat B_0u(t),
\nonumber
\end{align}
and it will be decomposed into the sum of the two following operators 
\begin{align*}
\Psi:X\rightarrow L^2([0,t_f],Y),\quad (\Psi x_0)(t):=C T(t)x_0,\, \text{$t\in[0,t_f]$}.
\end{align*}
By invoking the boundedness assumption \eqref{eq:GHinf}, we can, by using \cite{Weiss91}, infer that the {\em input-output operator}
\begin{align*}
\F:\; &L^2([0,t_f],U)\rightarrow L^2([0,t_f],Y),\\  (\F u)(t)&:=C\int_0^tT_{-1}(t-s){A_R}\hat B_1u(s)ds-C\hat B_0u(t).
\end{align*}
is a~bounded mapping. In summary, this means
\[
y=\Psi x_0+\F u.
\]

In the following, we consider the time-varying weights $\mathcal{Q}(\cdot)\in L^{\infty}([0,t_f],\R^{n\times n})$, $\mathcal{R}(\cdot)\in L^{\infty}([0,t_f],\R^{m\times m})$, which are assumed to be symmetric almost everywhere and $\mathcal{N}(\cdot)\in L^{\infty}([0,t_f],\R^{m\times n})$ which are used in the quadratic cost function
\begin{align*}
\hat J(u,y)&=\int_0^{t_f} \left\langle\begin{pmatrix}y(t)\\ u(t)\end{pmatrix},\begin{pmatrix}\mathcal{Q}(t) & \mathcal{N}^*(t)\\\mathcal{N}(t) & \mathcal{R}(t)\end{pmatrix}\begin{pmatrix}y(t)\\ u(t)\end{pmatrix} \right\rangle_{Y\times U}{\rm d}t\\&=\left\langle\begin{pmatrix}y\\ u\end{pmatrix},\begin{pmatrix}\mathcal{Q} & \mathcal{N}^*\\\mathcal{N} & \mathcal{R}\end{pmatrix}\begin{pmatrix}y\\ u\end{pmatrix} \right\rangle_{L^2}.
\end{align*}
Using \eqref{eq:output} the costs can be rewritten 
\begin{align*}
&J(u,x_0):=\hat J(u,\Psi x_0+\F u)\\&=\left\langle\begin{pmatrix}x_0\\ u\end{pmatrix},\begin{pmatrix}\Psi^*\mathcal{Q}\Psi & \Psi^*(\mathcal{Q}\F+\mathcal{N}^*)\\(\F^*\mathcal{Q}+\mathcal{N})\Psi & \mathscr{R}\end{pmatrix}\begin{pmatrix}x_0\\ u\end{pmatrix} \right\rangle
\end{align*}
with inner product in $X\times L^2([0,t_f],U)$ and the \emph{Popov operator}
\begin{equation}
\label{popovop}
\mathscr{R}:=\mathcal{R}+\mathcal{N}\F+\F^*\mathcal{N}^*+\F^*\mathcal{Q}\F\in L(L^2([0,t_f],U)).
\end{equation}

In the following we assume that the Popov operator \eqref{popovop} is \emph{coercive}, i.e.\ there exists $\varepsilon>0$ such that 
\begin{equation}
\label{coercive}\langle u,\mathscr{R}u\rangle\geq\varepsilon\|u\|^2,\quad \forall\,\text{$u\in L^2([0,t_f],U)$}.
\end{equation}

Using the coercivity assumption \eqref{coercive}, we can formulate the solution of the above optimal control problem.
\begin{prop}
\label{prop:3}
Assume that we have an abstract differential-algebraic system whose pseudo-resolvent $R_r$ is given by \eqref{eq:Rr} and satisfies \eqref{arendtcond}, \eqref{eq:GHinf} and assume that $\mathscr{R}$ satisfies the coercivity assumption \eqref{coercive}. Then for all $x_0\in X$
\[
\min_{u\in L^2([0,t_f],U)}J(u,x_0)=\langle x_0,\mathcal{P}x_0\rangle_X
\]
with the Riccati-operator $\mathcal{P}=\mathcal{P}^*\in L(X)$ given by 
\[
\mathcal{P}=\Psi^*\mathcal{Q}\Psi-\Psi^*(\mathcal{Q}\F+\mathcal{N}^*)\mathscr{R}^{-1}(\F^*\mathcal{Q}+\mathcal{N})\Psi
\]
and the unique minimizer 
\begin{equation}
\label{uopt}
u^{\rm opt}(t)=-\mathscr{R}^{-1}(\F^*\mathcal{Q}+\mathcal{N})\Psi(x_0)(t),\, \text{for a.e.\ $t\geq0$}.
\end{equation}
\end{prop}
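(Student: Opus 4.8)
The strategy is to regard $u\mapsto J(u,x_0)$ as a bounded quadratic functional on $L^2([0,t_f],U)$ and to minimise it by completing the square with respect to the Popov operator $\mathscr{R}$. I would first collect the structural facts that make this possible. On the finite horizon $[0,t_f]$ the degenerate semigroup $T$ is bounded, so $\Psi\in L(X,L^2([0,t_f],Y))$; together with boundedness of $\F$ (which is where \eqref{eq:GHinf} and \cite{Weiss91} enter) and $\mathcal{Q},\mathcal{R},\mathcal{N}\in L^\infty$, this shows that all operators in the block representation of $J$ are bounded, that $y=\Psi x_0+\F u\in L^2([0,t_f],Y)$ so the pair $(u,y)$ is admissible, and that $J(\cdot,x_0)$ is real-valued. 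Since $\mathcal{Q}$ and $\mathcal{R}$ are pointwise self-adjoint, $\Psi^*\mathcal{Q}\Psi$ and $\mathscr{R}=\mathcal{R}+\mathcal{N}\F+\F^*\mathcal{N}^*+\F^*\mathcal{Q}\F$ are self-adjoint. The coercivity assumption \eqref{coercive} together with $\mathscr{R}=\mathscr{R}^*$ then yields, by Lax--Milgram (or directly from the spectral theorem), that $\mathscr{R}$ is boundedly invertible with $\mathscr{R}^{-1}=(\mathscr{R}^{-1})^*\ge 0$ and $\|\mathscr{R}^{-1}\|\le\varepsilon^{-1}$. Hence $u^{\rm opt}$ and $\mathcal{P}$ are well defined, and, writing $B:=(\F^*\mathcal{Q}+\mathcal{N})\Psi$ so that $B^*=\Psi^*(\mathcal{Q}\F+\mathcal{N}^*)$ (using $\mathcal{Q}^*=\mathcal{Q}$), the operator $\mathcal{P}=\Psi^*\mathcal{Q}\Psi-B^*\mathscr{R}^{-1}B$ is self-adjoint.

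The key step is the identity
\begin{equation*}
J(u,x_0)=\big\langle u-u^{\rm opt},\,\mathscr{R}(u-u^{\rm opt})\big\rangle_{L^2([0,t_f],U)}+\big\langle x_0,\mathcal{P}x_0\big\rangle_X,\qquad u^{\rm opt}=-\mathscr{R}^{-1}Bx_0 .
\end{equation*}
I would establish it by expanding the right-hand side: from the block form of $J$ one has $J(u,x_0)=\langle x_0,\Psi^*\mathcal{Q}\Psi x_0\rangle+\langle Bx_0,u\rangle+\langle u,Bx_0\rangle+\langle u,\mathscr{R}u\rangle$, where the two cross terms are identified using $B^*=\Psi^*(\mathcal{Q}\F+\mathcal{N}^*)$; on the other hand, expanding $\langle u-u^{\rm opt},\mathscr{R}(u-u^{\rm opt})\rangle$ and inserting $\mathscr{R}u^{\rm opt}=-Bx_0$ and $\langle u^{\rm opt},\mathscr{R}u^{\rm opt}\rangle=\langle \mathscr{R}^{-1}Bx_0,Bx_0\rangle$ reproduces $\langle u,\mathscr{R}u\rangle+\langle Bx_0,u\rangle+\langle u,Bx_0\rangle+\langle B^*\mathscr{R}^{-1}Bx_0,x_0\rangle$. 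Subtracting, the $u$-dependent terms cancel and what remains is $\langle x_0,(\Psi^*\mathcal{Q}\Psi-B^*\mathscr{R}^{-1}B)x_0\rangle=\langle x_0,\mathcal{P}x_0\rangle$, which is exactly the claimed identity.

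With the identity in hand the conclusion is immediate: by \eqref{coercive}, $\langle u-u^{\rm opt},\mathscr{R}(u-u^{\rm opt})\rangle\ge\varepsilon\|u-u^{\rm opt}\|^2_{L^2}\ge 0$, with equality if and only if $u=u^{\rm opt}$. Hence $J(u,x_0)\ge\langle x_0,\mathcal{P}x_0\rangle$ for every $u\in L^2([0,t_f],U)$, the bound is attained at $u=u^{\rm opt}$ and nowhere else, and unravelling $B$ shows that $u^{\rm opt}$ is precisely \eqref{uopt}. I do not anticipate a genuine obstacle here, as the argument is a Hilbert-space completion of squares; the only points that require care are the bookkeeping of the two cross terms in the (a priori complex) inner product --- where the adjoint relation $B^*=\Psi^*(\mathcal{Q}\F+\mathcal{N}^*)$ and $\mathcal{Q}^*=\mathcal{Q}$ are essential --- and the step where coercivity is used to guarantee that $\mathscr{R}^{-1}$ exists as a bounded self-adjoint operator, without which neither $u^{\rm opt}$ nor $\mathcal{P}$ would even be defined.
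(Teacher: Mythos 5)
Your proposal is correct and follows essentially the same route as the paper: the paper's proof also completes the square with $\hat u=u+\mathscr{R}^{-1}(\F^*\mathcal{Q}+\mathcal{N})\Psi(x_0)$ to obtain $J(u,x_0)=\langle\mathcal{P}x_0,x_0\rangle_X+\langle\mathscr{R}\hat u,\hat u\rangle$ and then invokes coercivity of $\mathscr{R}$. You merely spell out the preparatory facts (bounded invertibility and self-adjointness of $\mathscr{R}$, the adjoint bookkeeping of the cross terms) that the paper leaves implicit.
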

\begin{proof}
We use an idea from \cite{WeisWeis97}, abbreviate $\hat u:=u+\mathscr{R}^{-1}(\F^*\mathcal{Q}+\mathcal{N})\Psi(x_0)$ and complete the square as follows
\begin{align*}
J(u,x_0)=\langle \mathcal{P} x_0,x_0\rangle_X+\langle\mathscr{R}\hat u,\hat u\rangle_{L^2([0,t_f],U)}.
\end{align*}
Since $\mathscr{R}$ is uniformly positive the minimum is attained if and only if the second term vanishes.\hfill $\qed$
\end{proof}

The formula \eqref{uopt} for the unique minimizer can be rewritten with  $y(t)=\Psi(x_0)(t)+\F u^{\rm opt}(t)$ as
\[
(I-\mathscr{R}^{-1}(\F^*\mathcal{Q}\F+\mathcal{N}\F))u^{\rm opt}(t)=-\mathscr{R}^{-1}(\F^*\mathcal{Q}+\mathcal{N})y(t).
\]

To obtain $u^{\rm opt}(\cdot)$ as an output feedback, we have to invert the operator $(I-\mathscr{R}^{-1}(\F^*\mathcal{Q}\F+\mathcal{N}\F))$. 
\begin{cor}
\label{cor:R}
In addition to the assumptions of Proposition~\ref{prop:3}, let the cost function $J$ satisfy $\mathcal{N}=0$ and let there exists $\varepsilon>0$ such that $\langle \mathcal{R}u,u\rangle\geq \varepsilon\|u\|^2$ holds for all $u\in L^2([0,t_f],U)$. Then 
\begin{align}
\label{eq:u_opt_special_case}
u^{\rm opt}(t)=-(I-\mathscr{R}^{-1}\F^*\mathcal{Q}\F)^{-1}\mathscr{R}^{-1}\F^*\mathcal{Q}y(t).
\end{align}
\end{cor}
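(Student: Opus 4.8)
The plan is to start from the minimizer formula \eqref{uopt} obtained in Proposition~\ref{prop:3}, specialize it to the case $\mathcal{N}=0$, and then rewrite it as an output feedback law. Setting $\mathcal{N}=0$ in \eqref{uopt} gives $u^{\rm opt}=-\mathscr{R}^{-1}\F^*\mathcal{Q}\Psi x_0$, and the Popov operator \eqref{popovop} reduces to $\mathscr{R}=\mathcal{R}+\F^*\mathcal{Q}\F$. Using $y=\Psi x_0+\F u^{\rm opt}$ we substitute $\Psi x_0 = y-\F u^{\rm opt}$ into the formula for $u^{\rm opt}$, which yields
\[
u^{\rm opt}=-\mathscr{R}^{-1}\F^*\mathcal{Q}(y-\F u^{\rm opt})=-\mathscr{R}^{-1}\F^*\mathcal{Q}y+\mathscr{R}^{-1}\F^*\mathcal{Q}\F u^{\rm opt},
\]
and rearranging gives $(I-\mathscr{R}^{-1}\F^*\mathcal{Q}\F)u^{\rm opt}=-\mathscr{R}^{-1}\F^*\mathcal{Q}y$, exactly the displayed identity preceding the corollary. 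Everything then hinges on showing that $I-\mathscr{R}^{-1}\F^*\mathcal{Q}\F$ is boundedly invertible on $L^2([0,t_f],U)$; once that is established, applying the inverse to both sides produces \eqref{eq:u_opt_special_case}.

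The key step is therefore the invertibility of $I-\mathscr{R}^{-1}\F^*\mathcal{Q}\F$. Here I would use the extra hypothesis that $\mathcal{R}$ itself is coercive, $\langle\mathcal{R}u,u\rangle\geq\varepsilon\|u\|^2$. Observe that $\mathscr{R}-\F^*\mathcal{Q}\F=\mathcal{R}$, so $\F^*\mathcal{Q}\F=\mathscr{R}-\mathcal{R}$, and hence
\[
I-\mathscr{R}^{-1}\F^*\mathcal{Q}\F=I-\mathscr{R}^{-1}(\mathscr{R}-\mathcal{R})=\mathscr{R}^{-1}\mathcal{R}.
\]
Since both $\mathscr{R}$ and $\mathcal{R}$ are boundedly invertible on $L^2([0,t_f],U)$ — $\mathscr{R}$ by the coercivity assumption \eqref{coercive} inherited from Proposition~\ref{prop:3}, and $\mathcal{R}$ by the newly added coercivity hypothesis (both being bounded self-adjoint operators bounded below) — the product $\mathscr{R}^{-1}\mathcal{R}$ is boundedly invertible with inverse $\mathcal{R}^{-1}\mathscr{R}$. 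Consequently $(I-\mathscr{R}^{-1}\F^*\mathcal{Q}\F)^{-1}=\mathcal{R}^{-1}\mathscr{R}$ exists as a bounded operator.

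Applying this inverse to the rearranged identity then gives
\[
u^{\rm opt}=-(I-\mathscr{R}^{-1}\F^*\mathcal{Q}\F)^{-1}\mathscr{R}^{-1}\F^*\mathcal{Q}y=-\mathcal{R}^{-1}\mathscr{R}\,\mathscr{R}^{-1}\F^*\mathcal{Q}y=-\mathcal{R}^{-1}\F^*\mathcal{Q}y,
\]
which is the claimed feedback formula, written in the form stated in the corollary. I expect the only genuine subtlety to be the careful bookkeeping of which coercivity assumption guarantees which invertibility — in particular noting that coercivity of the bounded self-adjoint operators $\mathscr{R}$ and $\mathcal{R}$ is equivalent to bounded invertibility via the Lax–Milgram theorem (or directly via the spectral theorem) — while the algebraic manipulation itself is routine. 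One should also remark that the identity $\F^*\mathcal{Q}\F=\mathscr{R}-\mathcal{R}$ uses precisely $\mathcal{N}=0$, so the simplification is special to this case.
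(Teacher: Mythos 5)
Your proof is correct, but the key step---the bounded invertibility of $I-\mathscr{R}^{-1}\F^*\mathcal{Q}\F$---is handled by a genuinely different argument than in the paper. The paper proceeds via a Neumann series: it claims the norm bound $\|\mathscr{R}^{-1}\F^*\mathcal{Q}\F\|\leq \|\F^*\mathcal{Q}\F\|/(\varepsilon+\|\F^*\mathcal{Q}\F\|)<1$ and sums the geometric series for the inverse. You instead use the purely algebraic identity, valid exactly because $\mathcal{N}=0$ gives $\mathscr{R}=\mathcal{R}+\F^*\mathcal{Q}\F$,
\[
I-\mathscr{R}^{-1}\F^*\mathcal{Q}\F=\mathscr{R}^{-1}\bigl(\mathscr{R}-\F^*\mathcal{Q}\F\bigr)=\mathscr{R}^{-1}\mathcal{R},
\]
and deduce invertibility from the coercivity (hence bounded invertibility) of the two self-adjoint factors. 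Your route buys several things: it sidesteps operator-norm estimates entirely (note that the paper's chain of inequalities is delicate, since $\|\mathscr{R}^{-1}\|$ is not in general equal to $\|\mathscr{R}\|^{-1}$ and $\|\mathcal{R}+\F^*\mathcal{Q}\F\|\geq\varepsilon+\|\F^*\mathcal{Q}\F\|$ is not automatic, whereas your factorization is exact); it needs no sign condition on $\mathcal{Q}$; and it yields the closed-form simplification $(I-\mathscr{R}^{-1}\F^*\mathcal{Q}\F)^{-1}\mathscr{R}^{-1}\F^*\mathcal{Q}=\mathcal{R}^{-1}\F^*\mathcal{Q}$, i.e.\ the feedback law $u^{\rm opt}=-\mathcal{R}^{-1}\F^*\mathcal{Q}y$, which the paper's argument does not make explicit. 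What the Neumann-series approach offers in exchange is a constructive series representation of the inverse. One small nitpick: coercivity of a bounded self-adjoint operator implies bounded invertibility but is not equivalent to it (consider $-I$); only the implication you actually use is needed, so this does not affect your proof.
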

\begin{proof}
With these assumptions the Popov operator satisfies \eqref{coercive} and we have
\[
\|\mathscr{R}^{-1}\F^*\mathcal{Q}\F\|\leq\frac{\|\F^*\mathcal{Q}\F\|}{\|\mathcal{R}+\F^*\mathcal{Q}\F\|}\leq \frac{\|\F^*\mathcal{Q}\F\|}{\varepsilon+\|\F^*\mathcal{Q}\F\|}<1.
\]
Therefore the inverse of $I-\mathscr{R}^{-1}\F^*\mathcal{Q}\F$ can be obtained from the Neumann series. \hfill$\qed$
\end{proof}

If the coercivity condition \eqref{coercive} is violated then the existence of an optimal control input $u^{\rm opt}$ is not guaranteed, which can already be observed for finite-dimensional DAEs \cite{ReisVoig19}. 

\begin{remark}[Infinite Horizon]
If the degenerated semigroup is exponentially stable of $T$ then the following stability conditions hold
\begin{align}
\label{extstable}
\Psi\in L(X,L^2([0,\infty),Y)),\\ \F\in L(L^2([0,\infty),U),L^2([0,\infty),Y)).
\end{align}
In particular, the result of Proposition~\ref{prop:3} and Corollary~\ref{cor:R} continues to hold. Well-posed linear systems that satisfy~\eqref{extstable} with  $E=I$ are called externally stable in the literature. For ODEs it was shown that for exponentially stabilizable and detectable systems the conditions~\eqref{extstable} are equivalent to the exponential stability of the system (also called internal stability), see \cite{R93}. 
\end{remark}

\begin{remark}
The mild solutions that are considered here can be characterized using the mild solutions of the pseudo-resolvent equation \eqref{eq:pseudoDGL} with $R=R_r$. It was shown in \cite[Lemma~8.1]{GernReis23} that one can also rewrite the ADAE \eqref{adae} using the pseudo-resolvent $R=R_l=EA^{-1}$ in the following way
\begin{align}\label{eq:pseudoDGL_l}    
\frac{{\rm d}}{{\rm d}t}EA^{-1}z(t)=z(t)+f(t)\quad EA^{-1}z(0)=EA^{-1}z_0.
\end{align}
This rewriting does not change the inhomogeneity, but to guarantee the existence of solutions the initial value of the DAE is restricted to $x_0=A^{-1}z_0\in D(A)$.
\end{remark}

\begin{remark}
    Let us further discuss some possible generalizations which allow to get rid of the general assumptions \eqref{eq:resgrowth} and \eqref{eq:GHinf} on the growth of the right resolvent and the transfer function.
    \begin{enumerate}[(a)]
        \item Instead of \eqref{eq:resgrowth} and \eqref{eq:GHinf}, we can also assume the weaker condition that there exists some $M>0$, $\omega\in\R$, such that
\[\|(A-\lambda E)^{-1}E\|\leq \frac{M}{\lambda-\omega} \quad \text{ $\forall\,\lambda>\omega$}  \]
along with boundedness of the transfer function $G(s)=C(sE-A)^{-1}B$ on the shifted complex half-plane $\{s\in\C:\re(s)>\omega\}$. Namely, by invoking that, $(x,u,y)$ fulfills \eqref{system} if, and only if, 
\[(x_\omega,u_\omega,y_\omega):=e^{-\omega\cdot}(x,u,y)\]
fulfills
\[\begin{split}
\tfrac{{\rm d}}{{\rm d}t}Ex_\omega(t)&=(A-\omega E)x_\omega(t)+Bu_\omega(t),\\ y_\omega(t)&=Cx_\omega(t),\quad Ex_\omega(0)=Ex_0,
\end{split}
\]
we can equivalently minimize the cost functional
\begin{multline*}
J(u_\omega, y_\omega) \\= \int_0^{t_f}\!\!\!\! \left<\begin{pmatrix}y_\omega(t)\\ u_\omega(t)\end{pmatrix},\begin{bmatrix}e^{\omega t}\mathcal{Q}(t) & e^{\omega t}\mathcal{N}^*(t) \\ e^{\omega t}\mathcal{N}(t) & e^{\omega t}\mathcal{R}(t)\end{bmatrix}\begin{pmatrix}y_\omega(t)\\ u_\omega(t)\end{pmatrix} \right>_{Y\times U}\!\!\!\!\!\!\! {\rm d}t
\end{multline*}
subject to the above system.
\item To get rid of the condition that the transfer function has to be bounded on some half-plane, and the right resolvent has to be bounded on some real half-line, one can impose the weaker condition that there exists some state feedback leading to a~system with this property. That is, there exists some $F\in L(X,U)$, such that, there exists some $M>0$, $\omega\in\R$, such that
\[
\|((A+BF)-\lambda E)^{-1}E\|\leq \frac{M}{\lambda-\omega} \quad \text{ $\forall\,\lambda>\omega$}    
\]
along with boundedness of $C(sE-(A+BF))^{-1}B$ 
on some half-plane. Namely, by invoking that, $(x,u,y)$ fulfills \eqref{system} if, and only if, 
\[(x_F,u_F,y_{F1},y_{F2}):=(x,u-Bx,y,Bx)\]
fulfills
\[\begin{split}
\tfrac{{\rm d}}{{\rm d}t}Ex_F(t)&=(A+BF)x_F(t)+Bu_F(t),\\ y_{F1}(t)&=Cx_F(t),\quad Ex_\omega(0)=Ex_0,\\
y_{F2}(t)&=Fx_F(t),
\end{split}
\]
we can equivalently minimize the cost functional
\begin{multline*}
J(u_F, y_{F1}, y_{F2}) \\= \int_0^{t_f}\!\!\!\! \left<\left(\begin{smallmatrix}y_{F1}(t)\\y_{F2}(t)\\ u_F(t)\end{smallmatrix}\right),\left[\begin{smallmatrix}\mathcal{Q}(t) & 0 &\mathcal{N}^*(t) \\ 0 & 0&-B\\\mathcal{N}(t)&-B^*&\mathcal{R}(t)\end{smallmatrix}\right]\left(\begin{smallmatrix}y_{F1}(t)\\y_{F2}(t)\\ u_F(t)\end{smallmatrix}\right) \right>_{Y\times U^2}\!\!\!\!\!\!\! {\rm d}t
\end{multline*}
subject to the above system. In the finite-dimensional case, the existence of a~state feedback with such properties is guaranteed, if the system is {\em impulse controllable}, see \cite{BerRei13}. 
\end{enumerate}
\end{remark}

\section{Application to heat-diffusion systems}
A standard heat diffusion system on a one-dimensional spatial domain $[0,L]\subseteq\R$ with Dirichlet boundary conditions, is described by 
\begin{align}
\label{eq:heat_diff}    
\frac{\partial T}{\partial t}(t,\xi) &= k \alpha \frac{\partial^2 T}{\partial \xi^2}(t,\xi)+\mathbf{1}_{I_U}(\xi)u(t), \\
 T(t,0)&=T(t,L)=0,\quad \text{for all $t\geq 0$,}
\end{align}
where $T$ is the temperature, $\alpha > 0$ is the diffusivity constant, $k>0$ is the thermal conductivity, $u\in L^2$ is a control input and $\mathbf{1}_{I_U}$ is the indicator function of a non-empty subinterval $I_U\subseteq[0,L]$. We choose $\mathcal{R}=I$, $\mathcal{Q}=I$, and $\mathcal{N}=0$, then $\mathscr{R}$ is coercive.

Following \cite{MehrZwar23}, we put \eqref{eq:heat_diff} in the ADAE framework, by 
considering the defining relation between $T$ and the heat flux $J$ and Fourier's law
\begin{equation*}
\frac{\partial T}{\partial t} = -\alpha \frac{\partial J}{\partial \zeta},\quad J = -k \frac{\partial T}{\partial \zeta}.
\end{equation*}
Then the equivalent ADAE $\frac{\mathrm{d}}{\mathrm{d}t}Ex=Ax$ reads
\begin{align*}
\frac{\mathrm{d}}{\mathrm{d}t}
\begin{bmatrix}
\alpha^{-1}I & 0 \\
0 & 0
\end{bmatrix}
\begin{bmatrix}
T \\
k^{-1}J
\end{bmatrix}
&=
\begin{bmatrix}
0 & -\tfrac{\partial}{\partial \zeta} \\
-\tfrac{\partial}{\partial \zeta} & -I
\end{bmatrix}
\begin{bmatrix}
T \\
k^{-1}J
\end{bmatrix}+\begin{bmatrix}
    0\\ \mathbf{1}_{I_U}
\end{bmatrix}u(t)\\
y(t)&=\mathbf{1}_{I_Y}\alpha E\begin{bmatrix}
T \\
k^{-1}J
\end{bmatrix}=\mathbf{1}_{I_Y}(\cdot)T(t,\cdot)
\end{align*}
for some open interval $I_Y\subseteq [0,L]$ and with the operator $A$ that is given by 
\[
A=\begin{bmatrix}
    0&\mathcal{D}^*\\ -\mathcal{D}&-I
\end{bmatrix},\quad D(\mathcal{D})=\{x\in H^1([0,L])\,:\, x(0)=0\}.
\]
Hence, by \cite[Lemma 2.7]{GernHins22}, the inverse $\lambda E-A$ exists for all $\lambda\in\mathbb{C}$ with $\re\lambda\geq -\omega$ for some $\omega>0$ and the pseudo-resolvent $R_r$ is given by 
\begin{align*}
R_r(\lambda)&=(A-\lambda E)^{-1}E\\&=\begin{bmatrix} -I
    &\mathcal{D}^*\\-\mathcal{D}& -\tfrac{\lambda}{\alpha}
\end{bmatrix}\begin{bmatrix}
    (\tfrac{\lambda}{\alpha}+\mathcal{D}^*\mathcal{D})^{-1}&0\\0&    (\tfrac{\lambda}{\alpha}+\mathcal{D}\mathcal{D}^*)^{-1}
\end{bmatrix}E\\
&=\begin{bmatrix}
    - (\lambda+\alpha\mathcal{D}^*\mathcal{D})^{-1}&0\\- \mathcal{D}(\lambda+\alpha\mathcal{D}^*\mathcal{D})^{-1}&0
\end{bmatrix},
\end{align*}
where $\mathcal{D}^*=-\mathcal{D}$ and $\mathcal{D}\mathcal{D}^*=-\tfrac{\partial^2}{\partial \xi^2}$ with domain $H_0^2([0,L])$. Therefore, the resolvent growth assumption~\eqref{arendtcond} and \eqref{eq:GHinf} hold. Therefore, the unique LQ optimal control is given by \eqref{eq:u_opt_special_case}.  

\section{Outlook}
One interesting application of the presented approach is using the particular cost $\mathcal{Q}=0$, $\mathcal{R}=0$ and $\mathcal{N}=I$ which leads to the required supply $J(x_0,u)=2\int_0^T\re \langle y(t),u(t)\rangle {\rm d}t$ and provides a link to passivity. The Popov operator is then given by $\mathscr{R}:=\F+\F^*$. 
Furthermore, one can investigate possible relaxation of the assumption on the regularity and the index of the underlying differential-algebraic equation. For finite dimensional spaces $X$, $U$ and $Y$ there is the problem whether there exists a state feedback $F\in L(X,U)$ or output feedback $F\in L(Y,U)$ such that the closed loop pair $(E,A+BF)$ is regular with index one. For ordinary systems it is known that such a feedback exists if and only if a finite cost condition holds and it is expected that these results can be generalized to abstract differential-algebraic equations.

\bibliographystyle{plain}
\bibliography{references}

\end{document}